\theoremstyle{plain}
\newtheorem{thm}{\protect\theoremname}
\theoremstyle{plain}
\newtheorem{lem}{\protect\lemmaname}
\theoremstyle{remark}
\newtheorem{rem}{\protect\remarkname}
\newtheorem{con}{\protect\conjecturename}
  \newenvironment{proof}[1][\proofname]{\par
    \normalfont\topsep6\p@\@plus6\p@\relax
    \trivlist
    \itemindent\parindent
    \item[\hskip\labelsep
          \scshape
      #1]\ignorespaces
  }{%
    \endtrivlist\@endpefalse
  }
  \providecommand{\proofname}{Proof}
\newcommand{\R}{\mathbb{R}}
\date{}
\providecommand{\lemmaname}{Lemma}
\providecommand{\theoremname}{Theorem}
\providecommand{\conjecturename}{Conjecture}
\providecommand{\remarkname}{Remark}
\begin{document}
\title{

Stochastic domination of exit times for random walks and Brownian motion with drift 
}


	\author{Xi Geng \footnote{Email : xi.geng@unimelb.edu.au}\\ School of Mathematics and Statistics\\
		University of Melbourne\\ \vspace{0.3cm} Parkville, VIC Australia\\
		Greg Markowsky \footnote{E-mail :
			greg.markowsky@monash.edu} \\
			School of Mathematics \\
			Monash University \\
			Clayton, VIC Australia}

\maketitle
\begin{abstract}
In this note, by an elementary use of Girsanov's transform we show that the exit time for either a biased random walk or a drifted Brownian motion on a symmetric interval is stochastically monotone with respect to the drift parameter. In the random walk case, this gives an alternative proof of a recent result of E. Pek\"oz and R. Righter in 2024, while the Brownian motion case is the continuous analogue as discussed in the same paper. Our arguments in both discrete and continuous cases are parallel to each other. We also outline a simple SDE proof for the Brownian case based on a standard comparison theorem.

\end{abstract}

\section{Introduction} \label{intro}
In a series of elegant recent papers, several variants of the following problem were addressed: determine the optimal drift for 1-dimensional simple random walk or Brownian motion starting at $0$ to stay for as long as possible in a symmetric interval. To be precise, let $\{S_{n}^{p}:n\geqslant0\}$
denote the simple random walk starting at the origin with one-step
distribution 
\[
\mathbb{P}(X=1)=p,\ \mathbb{P}(X=-1)=1-p.
\]
This is often referred to as the {\it biased random walk}. 
Given $k\in\mathbb{Z}_{+},$ define 

$$\sigma_{p}^{k} \triangleq\inf\{n:S_{n}^{p}=\pm k\}.$$ 
In \cite[Lemma 2]{zhang2023finding} it was shown that the function $p \to E[\sigma_{p}^{k}]$ is increasing for $p \in (0,1/2)$ and decreasing for $p \in (1/2,1)$, thereby showing that this function is maximized at $p=1/2$. Later, in \cite{pekoz2024increasing, pekoz2022fair}, it was shown that the stopping time $\sigma_{p}^{k}$ is stochastically maximized at $p=1/2$, and furthermore if $1/2 \leqslant p_1 < p_2 \leqslant 1$ then $\sigma_{p_1}^{k}$ stochastically dominates $\sigma_{p_2}^{k}$ (Theorem \ref{thm:Main} (i) below). Their proof is based on the use of a clever coupling argument. It was also asserted in \cite{pekoz2024increasing} that an appeal to Donsker's theorem allows one to deduce the analogous result for Brownian motion (Theorem \ref{thm:Main} (ii) below) as a corollary to the result for random walk.

The proofs given in \cite{pekoz2024increasing,pekoz2022fair,zhang2023finding} are ingenious but combinatorial in nature, and do not seem easy to be modified in order to prove Theorem \ref{thm:Main} (ii) directly (i.e. without proving the result first for simple random walk and then invoking Donsker's theorem). It is natural then to search for a direct proof in the Brownian case. In the end we were able to find two such proofs, one by performing a Girsanov change of measure, and the other by translating the problem into a question about SDE's and applying a standard comparison theorem. Both proofs are relatively short and non-technical. The purpose of this note is to present these two proofs.\footnote{In private communication, we were informed by Stephen Muirhead that there was yet a third proof (for the Brownian motion case) based on Anderson's inequality for Gaussian measures (cf. \cite[Corollary 3.5]{AK18}), and using such approach the result could further be generalised to drifted Gaussian processes.}

The following is the random walk result proved in \cite{pekoz2024increasing, pekoz2022fair}, as well as the analogous Brownian motion result that we will focus on.

\begin{thm}
\label{thm:Main}(i) {[}Random walk{]} 
With notation as above, for any fixed $n\in\mathbb{N}$ the function 
\[
(0,1)\ni p\mapsto\mathbb{P}(\sigma_{p}^{k}>n)
\]
is increasing on $(0,1/2]$ and decreasing on $[1/2,1)$. \\
(ii) {[}Brownian motion{]} Let $B_{t}$ denote
the standard one-dimensional Brownian motion. Given $\lambda\in\mathbb{R}$
and $b>0,$ define 
\[
\tau_{\lambda}^{b}\triangleq\inf\{t:B_{t}+\lambda t\notin(-b,b)\}.
\]
Then for any fixed $t>0,$ the function 
\[
\lambda\mapsto\mathbb{P}(\tau_{\lambda}^{b}>t)
\]
is decreasing in $\lambda$ on $[0,\infty)$. 
\end{thm}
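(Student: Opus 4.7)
The plan is to treat both parts of Theorem~\ref{thm:Main} with a common Girsanov-style strategy; I sketch the continuous case (ii), the discrete case (i) being entirely parallel with the likelihood ratio $(d\mathbb{P}_p/d\mathbb{P}_{1/2})|_{\mathcal{F}_n}$ playing the role of the exponential martingale and the monotone parameter $\mu = \tfrac{1}{2}\log\tfrac{p}{1-p}$ replacing $\lambda$.

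The first step is a Girsanov change of measure with constant drift $\lambda$. Writing $W$ for a standard Brownian motion under the reference measure and $\tau := \inf\{s : W_s \notin (-b,b)\}$, one gets
\[
\mathbb{P}(\tau_\lambda^b > t) = \mathbb{E}\!\left[e^{\lambda W_t - \lambda^2 t/2}\,\mathbf{1}_{\tau > t}\right],
\]
and the symmetry $W \mapsto -W$ on $\{\tau > t\}$ lets us replace $e^{\lambda W_t}$ with $\cosh(\lambda W_t)$. The second step is to decouple $\lambda$ from the joint law of $(W_t,\tau)$. Since $\cosh(\lambda W_t)\,e^{-\lambda^2 t/2}$ is a martingale bounded on $[0,t\wedge\tau]$, optional stopping together with $W_\tau = \pm b$ yields
\[
\mathbb{E}\!\left[\cosh(\lambda W_t)\,e^{-\lambda^2 t/2}\,\mathbf{1}_{\tau>t}\right] + \cosh(\lambda b)\,\mathbb{E}\!\left[e^{-\lambda^2\tau/2}\mathbf{1}_{\tau\le t}\right] = 1,
\]
and combining with the classical Laplace transform $\cosh(\lambda b)\,\mathbb{E}[e^{-\lambda^2\tau/2}] = 1$ (itself obtained by the same optional stopping at $\tau$) produces
\[
\mathbb{P}(\tau_\lambda^b > t) = \cosh(\lambda b)\,\mathbb{E}\!\left[e^{-\lambda^2\tau/2}\mathbf{1}_{\tau > t}\right],
\]
in which only the driftless exit time $\tau$ appears on the right.

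The third step is to differentiate in $\lambda$. Setting $A_{>} := \mathbb{E}[e^{-\lambda^2\tau/2}\mathbf{1}_{\tau>t}]$, $A_{\le} := \mathbb{E}[e^{-\lambda^2\tau/2}\mathbf{1}_{\tau\le t}]$, and $B_{>},B_{\le}$ analogously with an extra factor of $\tau$, direct differentiation gives $\tfrac{d}{d\lambda}\mathbb{P}(\tau_\lambda^b>t) = b\sinh(\lambda b)A_{>} - \lambda\cosh(\lambda b)B_{>}$, while differentiating $\cosh(\lambda b)(A_{>}+A_{\le}) = 1$ yields $b\tanh(\lambda b) = \lambda(B_{>}+B_{\le})/(A_{>}+A_{\le})$. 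Substituting for $b\sinh(\lambda b)$ collapses the derivative to
\[
\tfrac{d}{d\lambda}\mathbb{P}(\tau_\lambda^b > t) = \frac{\lambda\cosh(\lambda b)\,A_{\le}A_{>}}{A_{>}+A_{\le}}\!\left(\frac{B_{\le}}{A_{\le}} - \frac{B_{>}}{A_{>}}\right).
\]
Under the tilted measure $d\mathbb{P}^\lambda \propto e^{-\lambda^2\tau/2}\,d\mathbb{P}$, we have $B_{\le}/A_{\le} = \mathbb{E}^\lambda[\tau \mid \tau\le t] \le t \le \mathbb{E}^\lambda[\tau \mid \tau > t] = B_{>}/A_{>}$, so the derivative is $\le 0$.

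The step I expect to be the main obstacle is engineering the reduction in step two: a naive differentiation of the original Girsanov expression involves the joint law of $(W_t,\tau)$ and does not carry a visible sign. The role of the optional stopping identity is to trade $W_t$ for the exit value $\pm b$, after which the whole problem reduces to the near-tautological observation that $\{\tau\le t\}$ and $\{\tau>t\}$ place $\tau$ on opposite sides of $t$. The discrete case then goes through in parallel, with $c^\sigma$ (where $c = 2\sqrt{p(1-p)}$) in the role of $e^{-\lambda^2\tau/2}$ and the analogous identity $\cosh(\mu k)\,\mathbb{E}_{1/2}[c^\sigma] = 1$.
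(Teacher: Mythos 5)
Your argument is correct, but it is organised quite differently from the paper's. The paper changes measure directly between two drifted laws $\mathbb{Q}_{\lambda_1}\to\mathbb{Q}_{\lambda_2}$ (Lemma \ref{girs}) and then kills the exit-position factor $e^{-(\lambda_1-\lambda_2)B_\tau}$ by invoking the nontrivial fact that $B_\tau$ and $\tau$ are independent under the \emph{drifted} law (Lemma \ref{lem:Indep}, Reuter's theorem, which the paper cites rather than reproves); the comparison is then finished by the elementary negative-correlation inequality of Lemma \ref{lem:EleLem}. You instead only ever tilt back to the driftless measure, where the needed decoupling is trivial: the symmetry of the interval lets you symmetrise $e^{\lambda W_t}$ into $\cosh(\lambda W_t)$, and optional stopping of the symmetrised martingale $\cosh(\lambda W_s)e^{-\lambda^2 s/2}$ at $\tau\wedge t$, combined with the classical identity $\cosh(\lambda b)\,\mathbb{E}[e^{-\lambda^2\tau/2}]=1$, yields the closed form $\mathbb{P}(\tau_\lambda^b>t)=\cosh(\lambda b)\,\mathbb{E}\bigl[e^{-\lambda^2\tau/2}\mathbf{1}_{\tau>t}\bigr]$. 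This is exactly the paper's intermediate formula specialised to $\lambda_1=0$ (since $\mathbb{E}_0[e^{-\lambda^2\tau/2}]=1/\cosh(\lambda b)$), but you reach it without the drifted-case independence lemma. You then compare two drifts by differentiating in $\lambda$ rather than by a second change of measure; your final inequality $\mathbb{E}^\lambda[\tau\mid\tau\le t]\le t\le\mathbb{E}^\lambda[\tau\mid\tau>t]$ is the same negative-correlation phenomenon as Lemma \ref{lem:EleLem}. The trade-off: your route replaces the one external input (Reuter's theorem) by elementary symmetry and optional stopping, which is a genuine gain in self-containedness, at the cost of a routine but unaddressed justification of differentiation under the expectation (dominated convergence, using $\mathbb{E}[\tau]<\infty$ near $\lambda=0$) and a somewhat heavier computation. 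Your discrete case is only sketched, but the parallel does go through with the martingale $(2\sqrt{pq})^m\cosh(\mu S_m)$, $\mu=\tfrac12\log(p/q)$, in place of $\cosh(\lambda W_s)e^{-\lambda^2 s/2}$; if you write it out, note that the monotone parameter there is $c=2\sqrt{pq}\in(0,1]$, so the final step is most cleanly done as a two-point comparison (as in the paper) rather than by differentiating in $p$.
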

%

\begin{rem}
Theorem \ref{thm:Main} seems surprisingly resistant to generalisations. The conclusion is in general not true if the drift is assumed to be non-constant (a simple comparison relation between the drifts does not have a clear implication on the relation between exit times). For instance, if the drift is allowed to depend on the position of the path, one could make a large positive drift when the process goes negative which would push it back to the center, resulting in longer stay in the interval. The theorem also fails to hold in general if the drift is assumed to be time-dependent. 
\end{rem}

\if2
The Brownian motion case, which is to some extent easier to understand,
is based on a simple application of Girsanov's theorem together with
the crucial observation that the exit location and the exit time are
independent even in the drifted case (Reuter's theorem). The discrete case essentially uses a discrete form of Girsanov's theorem (Lemma \ref{lem:CoM} (ii)), which may be of pedagogical interest since it is easy to check "by hand", while the standard Girsanov's theorem for Brownian motion involves an understanding of subtleties of quadratic variation and Levy's characterization of Brownian motion (see \cite[Thm. 10.15]{klebaner2012introduction}). 
\fi 

In the next two sections, we develop the Girsanov proof of the theorem in both the Brownian motion and random walk contexts. Our arguments are entirely parallel in these two cases. In Section \ref{sec:SDEPf}, we outline the SDE proof for the Brownian case based on a well-known comparison theorem. This SDE proof is the continuous-time analogue of an argument which appears for the discrete-time case in \cite{ross2023second}.

Incidentally, as mentioned earlier, part $(ii)$ of the theorem was stated as a corollary to part $(i)$ in \cite{pekoz2024increasing} by invoking the weak convergence of simple random walk to Brownian motion. While this is certainly correct in spirit, it is the opinion of the authors of this paper that making such an argument rigorous would present various unpleasant technicalities. It may be beneficial and simpler to just have a direct proof in the Brownian case, as we have done here.


\section{The Girsanov proof}

In this section, we present the change-of-measure proof for Brownian motion. As indicated earlier, this proof can be directly adapted, mutatis mutandis, to prove the discrete result as well, and we will discuss this in the next section.

Let us begin by setting up the problem on the canonical sample space. We take $\Omega$ to be the continuous path
space, $B_{t}(\omega)\triangleq\omega_{t}$ to be the coordinate process
and ${\cal F}_{t}$ to be the natural filtration of $B_{t}$.

Let
$\mathbb{Q}_{\lambda}$ denote the probability measure
over $\Omega$ under which $B_{t}$ becomes a Brownian motion with
drift $\lambda t$. Note that $\mathbb{Q}_{0}$ is simply the standard Wiener measure. We also set $\tau(\omega)\triangleq\inf\{t:B_{t}(\omega)\notin(-b,b)\}$
($b>0$ is given fixed). The following version of Girsanov's theorem is needed for our purpose.

\begin{lem} \label{girs}
For each
$t>0$, one has 
\[
\frac{d\mathbb{Q}_{\lambda}}{d\mathbb{Q}_{0}}=e^{\lambda B_t- \frac{1}{2}\lambda^2 t}\ \ \ \text{on }{\cal F}_{t} \quad and \quad \frac{d\mathbb{Q}_{\lambda}}{d\mathbb{Q}_{0}}=e^{\lambda B_\tau- \frac{1}{2}\lambda^2 \tau}\ \ \ \text{on }{\cal F}_{\tau}.
\]
Furthermore, given two drift values $\lambda_1, \lambda_2$ we have 
\[
\frac{d\mathbb{Q}_{\lambda_2}}{d\mathbb{Q}_{\lambda_1}}=e^{-(\lambda_{1}-\lambda_{2})B_{t}+\frac{\lambda_{1}^{2}-\lambda_{2}^{2}}{2}t}\ \ \ \text{on }{\cal F}_{t} \quad and \quad \frac{d\mathbb{Q}_{\lambda_2}}{d\mathbb{Q}_{\lambda_1}}=e^{-(\lambda_{1}-\lambda_{2})B_{\tau}+\frac{\lambda_{1}^{2}-\lambda_{2}^{2}}{2}\tau}\ \ \ \text{on }{\cal F}_{\tau}.
\]

\end{lem}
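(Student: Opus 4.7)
The plan is to split the lemma into three nested tasks: first establish the standard Girsanov/Cameron--Martin formula on $\mathcal{F}_t$; second, promote this formula to $\mathcal{F}_\tau$ by an optional stopping argument that exploits the fact that $B$ stays bounded up to $\tau$; third, derive the two-drift formulas as a trivial chain-rule consequence. Since the whole point of the paper is to keep the proof elementary, I would simply invoke the classical Cameron--Martin theorem for the first task, citing, e.g., \cite[Thm.~10.15]{klebaner2012introduction}. Equivalently, one can define $\mathbb{Q}_\lambda$ on $\mathcal{F}_t$ via the density $e^{\lambda B_t - \frac{1}{2}\lambda^2 t}$ (which has expectation $1$ under $\mathbb{Q}_0$ by a Gaussian moment computation), check consistency in $t$, and verify by a characteristic function computation that the coordinate process has drift $\lambda$ under $\mathbb{Q}_\lambda$.

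The only genuine work is extending the density from $\mathcal{F}_t$ to $\mathcal{F}_\tau$. Consider the exponential martingale
\[
M_t \triangleq e^{\lambda B_t - \frac{1}{2}\lambda^2 t}
\]
under $\mathbb{Q}_0$. The key observation is that $M_{t\wedge \tau}$ is uniformly bounded: for every $s \leqslant \tau$ one has $|B_s| \leqslant b$, while the time contribution $-\frac{1}{2}\lambda^2 (t\wedge\tau)$ is non-positive, so
\[
M_{t\wedge\tau} \leqslant e^{|\lambda| b}
\]
pointwise. Since $\tau < \infty$ almost surely under $\mathbb{Q}_0$ (Brownian motion exits any bounded interval a.s.), we have $M_{t\wedge\tau}\to M_\tau$ a.s. as $t\to\infty$, and the bound above makes this a uniformly integrable martingale. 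The optional sampling theorem then gives $M_{t\wedge\tau} = \mathbb{E}_{\mathbb{Q}_0}[M_\tau\mid \mathcal{F}_{t\wedge\tau}]$; in particular $\mathbb{E}_{\mathbb{Q}_0}[M_\tau]=1$ and, for any $A\in \mathcal{F}_\tau$, writing $A\cap\{\tau\leqslant t\}\in\mathcal{F}_t$ and applying the already-established density on $\mathcal{F}_t$ yields
\[
\mathbb{Q}_\lambda\bigl(A\cap\{\tau\leqslant t\}\bigr) = \mathbb{E}_{\mathbb{Q}_0}\bigl[\mathbf{1}_{A\cap\{\tau\leqslant t\}}\, M_t\bigr] = \mathbb{E}_{\mathbb{Q}_0}\bigl[\mathbf{1}_{A\cap\{\tau\leqslant t\}}\, M_\tau\bigr],
\]
where the second equality uses the martingale property of $M$ together with $M_t = M_\tau$ on $\{\tau\leqslant t\}$ after conditioning on $\mathcal{F}_\tau$. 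Letting $t\to\infty$ and applying monotone convergence gives the claimed density on $\mathcal{F}_\tau$.

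The second pair of formulas is then immediate: on $\mathcal{F}_t$,
\[
\frac{d\mathbb{Q}_{\lambda_2}}{d\mathbb{Q}_{\lambda_1}} = \frac{d\mathbb{Q}_{\lambda_2}/d\mathbb{Q}_0}{d\mathbb{Q}_{\lambda_1}/d\mathbb{Q}_0} = \frac{e^{\lambda_2 B_t - \frac{1}{2}\lambda_2^2 t}}{e^{\lambda_1 B_t - \frac{1}{2}\lambda_1^2 t}} = e^{-(\lambda_1-\lambda_2)B_t + \frac{\lambda_1^2-\lambda_2^2}{2}t},
\]
and identically on $\mathcal{F}_\tau$ with $t$ replaced by $\tau$.

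I expect the mildly delicate step to be the justification that passing from $\mathcal{F}_t$ to $\mathcal{F}_\tau$ is legitimate, i.e.\ confirming uniform integrability of $(M_{t\wedge\tau})$ and being careful with events of the form $A\cap\{\tau\leqslant t\}$. Once the uniform bound $M_{t\wedge\tau}\leqslant e^{|\lambda|b}$ is in hand, however, this reduces to a routine optional sampling argument and nothing beyond standard martingale theory is needed.
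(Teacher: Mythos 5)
Your proposal is correct and follows essentially the same route as the paper: cite the classical Girsanov/Cameron--Martin theorem for the density on $\mathcal{F}_t$, observe that the stopped martingale $M_{t\wedge\tau}$ is bounded by $e^{|\lambda| b}$ (hence uniformly integrable) to pass to $\mathcal{F}_\tau$, and obtain the two-drift formulas by the chain rule. The paper's proof is just a terser version of exactly this argument, so nothing further is needed.
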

\begin{proof}
The general Girsanov's theorem is contained in e.g. \cite[Thm. 10.15]{klebaner2012introduction} or \cite[Chap. VIII, Proposition 1.3]{revuz1991continuous}. Here the only needed observation is that if we set $M_t \triangleq e^{\lambda B_t- \frac{1}{2}\lambda^2 t}$, then the stopped martingale $\{M_{\tau \wedge t}:n \geqslant 0\}$ is bounded and thus uniformly integrable. 

\end{proof}
The following lemma appears in \cite[P.84]{RW20}, so we omit its proof, though we note that the proof is identical, line for line, with that of Lemma \ref{lem:Indep2} below, provided one replaces $S$ with $B$, $\sigma$ with $\tau$, $\mathbb{Q}_p$ with $\mathbb{Q}_{\lambda}$, and the discrete-time martingale $M_n$ from Lemma \ref{girs_disc} with the the exponential martingale $e^{\lambda B_t-\frac{1}{2}\lambda^2 t}$ of Brownian motion.
\begin{lem}
\label{lem:Indep} Under $\mathbb{Q}_{\lambda},$
the random variables $B_{\tau}$ and $\tau$ are independent. 
\end{lem}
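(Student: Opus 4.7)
The plan is to reduce independence under $\mathbb{Q}_\lambda$ to independence under the Wiener measure $\mathbb{Q}_0$, and to transfer the result using the Radon-Nikodym derivative from Lemma \ref{girs}. The key structural observation to exploit is that on $\mathcal{F}_\tau$ the density $\frac{d\mathbb{Q}_\lambda}{d\mathbb{Q}_0} = e^{\lambda B_\tau}\cdot e^{-\lambda^2\tau/2}$ factorizes as a function of $B_\tau$ alone times a function of $\tau$ alone.

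First I would establish independence under $\mathbb{Q}_0$ by symmetry: since the Wiener measure is invariant under the reflection $B_t \mapsto -B_t$, the pair $(B_\tau, \tau)$ has the same joint law as $(-B_\tau, \tau)$. Since $B_\tau \in \{-b, b\}$ with equal probability $1/2$ (by optional stopping applied to the bounded martingale $B_{t\wedge\tau}$), this symmetry forces the conditional law of $\tau$ given $B_\tau = b$ to coincide with the conditional law of $\tau$ given $B_\tau = -b$, which is exactly $\mathbb{Q}_0$-independence of $B_\tau$ and $\tau$.

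Next I would invoke the Girsanov formula from Lemma \ref{girs} on $\mathcal{F}_\tau$. For bounded measurable $f$ and $g$, combining it with the factorization from Step 1 gives
\[
\mathbb{E}_{\mathbb{Q}_\lambda}\bigl[f(B_\tau)g(\tau)\bigr] = \mathbb{E}_{\mathbb{Q}_0}\bigl[f(B_\tau)e^{\lambda B_\tau}\bigr]\cdot \mathbb{E}_{\mathbb{Q}_0}\bigl[g(\tau)e^{-\lambda^2\tau/2}\bigr].
\]
Specializing in turn to $g\equiv 1$ and $f\equiv 1$ yields analogous product expressions for the two marginals $\mathbb{E}_{\mathbb{Q}_\lambda}[f(B_\tau)]$ and $\mathbb{E}_{\mathbb{Q}_\lambda}[g(\tau)]$, and multiplying these two marginals introduces an extra factor $\mathbb{E}_{\mathbb{Q}_0}[e^{\lambda B_\tau}]\cdot \mathbb{E}_{\mathbb{Q}_0}[e^{-\lambda^2\tau/2}]$; but by Step 1 this factor equals $\mathbb{E}_{\mathbb{Q}_0}[e^{\lambda B_\tau - \lambda^2\tau/2}] = 1$, since the Girsanov density integrates to one under $\mathbb{Q}_0$. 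Hence the joint expectation equals the product of the marginals, giving independence under $\mathbb{Q}_\lambda$.

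The only genuinely nontrivial input is Step 1, the $\mathbb{Q}_0$-independence; once it is in hand, Step 2 is purely algebraic manipulation driven by the product structure of the density and the fact that it has total mass one. So the main (and relatively minor) obstacle will be giving a clean symmetry argument under $\mathbb{Q}_0$; after that the conclusion for any drift $\lambda$ comes essentially for free.
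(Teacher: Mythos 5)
Your proposal is correct and is essentially the paper's own argument: the paper omits the proof by reference to \cite[P.84]{RW20} but states it is line-for-line the proof of Lemma \ref{lem:Indep2}, which is exactly your Step 2 --- factor the Girsanov density on $\mathcal{F}_\tau$ as a function of $B_\tau$ times a function of $\tau$, use independence under $\mathbb{Q}_0$, and absorb the extra factor using $\mathbb{E}_{\mathbb{Q}_0}[e^{\lambda B_\tau-\lambda^2\tau/2}]=1$. Your Step 1 (the reflection-symmetry argument for $\mathbb{Q}_0$-independence) is the standard justification of what the paper's discrete proof dismisses as ``obvious'' for the unbiased case, so there is no substantive difference.
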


We also need the following elementary estimate.
\begin{lem}
\label{lem:EleLem}Let $X$ be a random variable and let $M$ be any real number. Suppose that $P(X \leqslant M) > 0$. Then $\mathbb{E}[X|X\leqslant M]\leqslant\mathbb{E}[X]$.
\end{lem}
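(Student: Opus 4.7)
The plan is to apply the law of total expectation with respect to the partition $\{X\leqslant M\}\sqcup\{X>M\}$. Writing $A=\{X\leqslant M\}$, one has
\[
\mathbb{E}[X]=\mathbb{E}[X\mid A]\,\mathbb{P}(A)+\mathbb{E}[X\mid A^{c}]\,\mathbb{P}(A^{c}),
\]
provided $\mathbb{P}(A^{c})>0$; the case $\mathbb{P}(A^{c})=0$ is trivial since then $\mathbb{E}[X\mid A]=\mathbb{E}[X]$.

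The key observation is that $M$ sits between the two conditional means. Indeed, on the event $A$ we have $X\leqslant M$ pointwise, so $\mathbb{E}[X\mid A]\leqslant M$, and on the event $A^{c}$ we have $X>M$ pointwise, so $\mathbb{E}[X\mid A^{c}]\geqslant M$. In particular $\mathbb{E}[X\mid A^{c}]\geqslant \mathbb{E}[X\mid A]$.

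Substituting this inequality into the decomposition yields
\[
\mathbb{E}[X]\geqslant \mathbb{E}[X\mid A]\,\mathbb{P}(A)+\mathbb{E}[X\mid A]\,\mathbb{P}(A^{c})=\mathbb{E}[X\mid A],
\]
which is the claim. There is essentially no obstacle here; the only point to watch is the edge case $\mathbb{P}(A^{c})=0$, and a mild integrability assumption on $X$ so that the conditional expectations are well defined (in the intended application $X$ is a non-negative stopping time, so this is automatic under the convention that both sides may equal $+\infty$).
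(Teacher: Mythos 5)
Your proof is correct, but it takes a different route from the paper's. You partition the sample space into $A=\{X\leqslant M\}$ and $A^{c}$, note that the threshold $M$ separates the two conditional means ($\mathbb{E}[X\mid A]\leqslant M\leqslant\mathbb{E}[X\mid A^{c}]$), and conclude from the law of total expectation that $\mathbb{E}[X]$ is a convex combination of two numbers each at least $\mathbb{E}[X\mid A]$. The paper instead observes that $g(x)={\bf 1}_{\{x\leqslant M\}}$ is decreasing, so $(X-a)(g(X)-g(a))\leqslant 0$ for every $a$, whence $\mathbb{E}[Xg(X)]\leqslant\mathbb{E}[X]\mathbb{E}[g(X)]$ (a one-variable Chebyshev/correlation inequality), and then divides by $\mathbb{P}(X\leqslant M)$. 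Your argument is arguably more elementary and self-contained, needing no covariance computation; the paper's argument buys a slightly more general fact (non-positive correlation of $X$ with \emph{any} decreasing function of $X$, not just indicators), though that generality is not used. Both handle the degenerate case $\mathbb{P}(A^{c})=0$ correctly, and both implicitly assume integrability, which is harmless since in the application $X$ is the bounded variable $e^{\frac{\lambda_{1}^{2}-\lambda_{2}^{2}}{2}\tau}\in(0,1]$ (not the stopping time itself, as your closing aside suggests — a cosmetic point only).
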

\begin{proof}

If we let $g(x) = 1_{x \leq M}$, then $g(x)$ is a decreasing function, so that $(X-a)(g(X)-g(a)) \leq 0$ a.s. for any $a \in \R$. Thus, $0 \geq E[(X-E[X])(g(X)-g(E[X]))] = E[Xg(X)] - E[X]E[g(X)]$, so that $X$ and $1_{X \leq M}$ are non-positively  correlated. Thus, 
 $\mathbb{E}[X|X\leqslant M] = \frac{E[X1_{X \leqslant M}]}{P(X \leqslant M)} \leqslant\frac{E[X]E[1_{X \leqslant M}]}{P(X \leqslant M)} = E[X]$.

\end{proof}
\begin{proof}[Proof of Theorem \ref{thm:Main} (ii)] 

Let $0\leqslant\lambda_{1}<\lambda_{2}$ be given fixed.
The starting observation is that 
\[
\mathbb{Q}_{\lambda_{2}}(\tau>t)=\mathbb{E}_{\lambda_{2}}\big[{\bf 1}_{\{\tau>t\}}\big]=\mathbb{E}_{\lambda_{1}}\big[e^{-(\lambda_{1}-\lambda_{2})B_{\tau}+\frac{\lambda_{1}^{2}-\lambda_{2}^{2}}{2}\tau}{\bf 1}_{\{\tau>t\}}\big],
\]
by Lemma \ref{girs}. The independence
property given by Lemma \ref{lem:Indep} implies that 
\begin{align*}
 & \mathbb{E}_{\lambda_{1}}\big[e^{-(\lambda_{1}-\lambda_{2})B_{\tau}+\frac{\lambda_{1}^{2}-\lambda_{2}^{2}}{2}\tau}{\bf 1}_{\{\tau>t\}}\big]\\
 & =\mathbb{E}_{\lambda_{1}}\big[e^{-(\lambda_{1}-\lambda_{2})B_{\tau}}\big]\mathbb{E}_{\lambda_{1}}\big[e^{\frac{\lambda_{1}^{2}-\lambda_{2}^{2}}{2}\tau}{\bf 1}_{\{\tau>t\}}\big]=\frac{\mathbb{E}_{\lambda_{1}}\big[e^{\frac{\lambda_{1}^{2}-\lambda_{2}^{2}}{2}\tau}{\bf 1}_{\{\tau>t\}}\big]}{\mathbb{E}_{\lambda_{1}}\big[e^{\frac{\lambda_{1}^{2}-\lambda_{2}^{2}}{2}\tau}\big]} \\
 & = \frac{\mathbb{E}_{\lambda_{1}}\big[e^{\frac{\lambda_{1}^{2}-\lambda_{2}^{2}}{2}\tau}\big|\tau>t\big]}{\mathbb{E}_{\lambda_{1}}\big[e^{\frac{\lambda_{1}^{2}-\lambda_{2}^{2}}{2}\tau}\big]} \mathbb{Q}_{\lambda_{1}}(\tau>t)
\end{align*}
It follows that 
\[
\mathbb{Q}_{\lambda_{2}}(\tau>t)\leqslant\mathbb{Q}_{\lambda_{1}}(\tau>t)\iff\mathbb{E}_{\lambda_{1}}\big[e^{\frac{\lambda_{1}^{2}-\lambda_{2}^{2}}{2}\tau}\big|\tau>t\big]\leqslant\mathbb{E}_{\lambda_{1}}\big[e^{\frac{\lambda_{1}^{2}-\lambda_{2}^{2}}{2}\tau}\big].
\]
The latter inequality is a direct consequence of Lemma \ref{lem:EleLem}
since $\lambda_{1}^{2}<\lambda_{2}^{2}$. 
\end{proof}

\section{The discrete case} \label{discrete section}

We now discuss the proof of Theorem \ref{thm:Main} $(i)$ (the discrete case). Let $S_n$ denote unbiased simple random walk with respect to a measure $\mathbb{Q}_{1/2}$; that is, the simple random walk with equal probability of moving to the right and to the left. We need to be able to change measure in order to transform one random walk into another with a different bias. This requires a discrete analog of Lemma \ref{girs}, i.e. a discrete-time Girsanov's Theorem, and for this we need the discrete analog of the exponential martingale $e^{\lambda B_t- \frac{1}{2}\lambda^2 t}$. This is contained in the following lemma.

\begin{lem} \label{girs_disc}
(i) For $p,q \in (0,1)$ with $p+q=1$, the process $M_n = (2\sqrt{pq})^n \Big(\sqrt{\frac{p}{q}}\Big)^{S_n}$ is a martingale with respect to $\mathbb{Q}_{1/2}$.

\vspace{2mm}\noindent (ii) If we define a new measure $\mathbb{Q}_p$ by

\[
\frac{d\mathbb{Q}_{p}}{d\mathbb{Q}_{1/2}}= (2\sqrt{pq})^n \Big(\sqrt{\frac{p}{q}}\Big)^{S_n} \text{on }{\cal F}_{n},
\]
then with respect to $\mathbb{Q}_p$ the process $S_n$ is biased random walk. To be precise, $\mathbb{Q}_p(S_{n+1} = r | S_n = r-1) = p$ and $\mathbb{Q}_p(S_{n+1} = r | S_n = r+1) = q$. Furthermore,

\[
\frac{d\mathbb{Q}_{p}}{d\mathbb{Q}_{1/2}}= (2\sqrt{pq})^\sigma \Big(\sqrt{\frac{p}{q}}\Big)^{S_\sigma} \text{on }{\cal F}_{\sigma}.
\]
(iii) Given two biases $p_1, p_2$, we have

\[
\frac{d\mathbb{Q}_{p_2}}{d\mathbb{Q}_{p_1}}= \Big(\sqrt{\frac{p_2q_2}{p_1q_1}}\Big)^n \Big(\sqrt{\frac{p_2q_1}{p_1q_2}}\Big)^{S_n} \ \ \ \text{on }{\cal F}_{n} \quad and \quad \frac{d\mathbb{Q}_{p_2}}{d\mathbb{Q}_{p_1}}= \Big(\sqrt{\frac{p_2q_2}{p_1q_1}}\Big)^\sigma \Big(\sqrt{\frac{p_2q_1}{p_1q_2}}\Big)^{S_\sigma}\ \ \ \text{on }{\cal F}_{\sigma}.
\]

\end{lem}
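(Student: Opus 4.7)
The plan is to verify (i) by a direct one-step conditional expectation computation, derive (ii) from (i) using the standard Bayes formula for conditional expectations under a change of measure, and obtain (iii) as a purely algebraic consequence of (ii).

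For part (i), let $X_{n+1}\triangleq S_{n+1}-S_n$, which equals $\pm1$ with probability $1/2$ each under $\mathbb{Q}_{1/2}$ and is independent of $\mathcal{F}_n$. The ratio
\[
\frac{M_{n+1}}{M_n} = 2\sqrt{pq}\,\bigl(\sqrt{p/q}\bigr)^{X_{n+1}}
\]
has conditional expectation
\[
\mathbb{E}_{1/2}\!\left[\frac{M_{n+1}}{M_n}\,\Big|\,\mathcal{F}_n\right] = \sqrt{pq}\bigl(\sqrt{p/q}+\sqrt{q/p}\bigr) = p+q = 1,
\]
so $M_n$ is a $\mathbb{Q}_{1/2}$-martingale (and in particular $\mathbb{E}_{1/2}[M_n]=M_0=1$, so $\mathbb{Q}_p$ is a well-defined probability measure).

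For part (ii), the Bayes formula for change of measure gives $\mathbb{E}_p[Y\mid\mathcal{F}_n] = M_n^{-1}\mathbb{E}_{1/2}[M_{n+1}Y\mid\mathcal{F}_n]$ for any bounded $\mathcal{F}_{n+1}$-measurable $Y$. Specializing to $Y=\mathbf{1}_{\{X_{n+1}=1\}}$ and $Y=\mathbf{1}_{\{X_{n+1}=-1\}}$ and using the computation from (i) yields the transition probabilities $p$ and $q$, as required. To extend the density formula from $\mathcal{F}_n$ to $\mathcal{F}_\sigma$, observe that $\sigma<\infty$ $\mathbb{Q}_{1/2}$-a.s.\ and that the stopped martingale $M_{n\wedge\sigma}$ is uniformly bounded, since $|S_{n\wedge\sigma}|\leqslant k$ and $2\sqrt{pq}\leqslant 1$ by AM--GM. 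For any $A\in\mathcal{F}_\sigma$, the disjoint decomposition $A=\bigcup_n(A\cap\{\sigma=n\})$ has $A\cap\{\sigma=n\}\in\mathcal{F}_n$, and using the $\mathcal{F}_n$-density together with $M_\sigma=M_n$ on $\{\sigma=n\}$ gives
\[
\mathbb{Q}_p(A) = \sum_n \mathbb{E}_{1/2}\bigl[M_n\mathbf{1}_{A\cap\{\sigma=n\}}\bigr] = \sum_n \mathbb{E}_{1/2}\bigl[M_\sigma\mathbf{1}_{A\cap\{\sigma=n\}}\bigr] = \mathbb{E}_{1/2}\bigl[M_\sigma\mathbf{1}_A\bigr],
\]
which identifies the density on $\mathcal{F}_\sigma$.

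Part (iii) then follows mechanically from the chain rule for Radon--Nikodym derivatives, $d\mathbb{Q}_{p_2}/d\mathbb{Q}_{p_1} = (d\mathbb{Q}_{p_2}/d\mathbb{Q}_{1/2})/(d\mathbb{Q}_{p_1}/d\mathbb{Q}_{1/2})$, by inserting the formulas from (ii) and collecting the $n$ (resp.\ $\sigma$) power and the $S_n$ (resp.\ $S_\sigma$) power separately. There is no substantive obstacle in this proof: part (i) is a direct one-step calculation, part (iii) is algebraic, and the only delicate point is the boundedness argument in (ii), which allows us to pass from $\mathcal{F}_n$ to $\mathcal{F}_\sigma$ without needing a more general version of optional stopping.
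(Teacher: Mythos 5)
Your proposal is correct and follows essentially the same route as the paper: a one-step product-martingale computation for (i), a change-of-measure computation of the transition probabilities for (ii), boundedness of the stopped martingale to pass to $\mathcal{F}_\sigma$, and the chain rule for (iii). The only difference is cosmetic: you phrase (ii) via the abstract Bayes formula where the paper computes the ratio of expectations directly, and your decomposition of $A\in\mathcal{F}_\sigma$ over $\{\sigma=n\}$ spells out what the paper compresses into the single remark that the stopped martingale is uniformly integrable.
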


\begin{proof}
For part (i), we note that $S_n$ can be taken as $S_n = \sum_{j=1}^n X_j$, where the $X_j$'s are independent random variables equal to $\pm 1$ with equal probabilities. We note that 

$$
\mathbb{E}_{\mathbb{Q}_{1/2}}[\Big(\sqrt{\frac{q}{p}}\Big)^{X_j}] = \frac{1}{2}(\sqrt{\frac{q}{p}} + \sqrt{\frac{p}{q}}) = \frac{1}{2\sqrt{pq}}.
$$
The process $M_n$ is therefore revealed to be a product martingale with respect to $\mathbb{Q}_{1/2}$.

For part (ii), we note first that part (i) of this lemma was required to show that $\frac{d\mathbb{Q}_{p}}{d\mathbb{Q}_{1/2}}$ defines a genuine change of measure. We may then calculate as follows (for any pair $n,r$ such that $\mathbb{Q}_p(S_n = r-1)>0$):

\begin{align*}
    \mathbb{Q}_p & (S_{n+1} = r | S_n = r-1) = \frac{\mathbb{E}_{\mathbb{Q}_p}[1_{\{S_{n+1} = r\}} 1_{\{S_n = r-1\}}]}{\mathbb{E}_{\mathbb{Q}_p}[1_{\{S_{n} = r-1\}}]} \\
    & = \frac{\mathbb{E}_{\mathbb{Q}_{1/2}}\Big[(2\sqrt{pq})^{n+1} \Big(\sqrt{\frac{p}{q}}\Big)^{S_{n+1}}1_{\{S_{n+1} = r\}} 1_{\{S_n = r-1\}}\Big]}{\mathbb{E}_{\mathbb{Q}_{1/2}}\Big[(2\sqrt{pq})^n \Big(\sqrt{\frac{p}{q}}\Big)^{S_n} 1_{\{S_{n} = r-1\}}\Big]} \\
    &= \frac{(2\sqrt{pq})^{n+1} \Big(\sqrt{\frac{p}{q}}\Big)^{r}\mathbb{E}_{\mathbb{Q}_{1/2}}\Big[1_{\{S_{n+1} = r\}} 1_{\{S_n = r-1\}}\Big]}{(2\sqrt{pq})^{n} \Big(\sqrt{\frac{p}{q}}\Big)^{r-1}\mathbb{E}_{\mathbb{Q}_{1/2}}\Big[1_{\{S_{n} = r-1\}}\Big]} \\
    & = 2p \mathbb{Q}_{1/2}(S_{n+1} = r | S_n = r-1) = p.
\end{align*}
A similar calculation shows $\mathbb{Q}_p(S_{n+1} = r | S_n = r+1)=q$. The second part of part (ii) holds because $M_n$ is uniformly integrable.

As in the proof of Lemma \ref{girs}, part (iii) follows from (ii) by the chain rule.
\end{proof}

\begin{lem}
\label{lem:Indep2} Under $\mathbb{Q}_{p},$
the random variables $S_{\sigma}$ and $\sigma$ are independent. 
\end{lem}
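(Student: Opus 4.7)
The plan is to follow the classical Reuter-style argument, which is precisely the continuous-time proof of Lemma \ref{lem:Indep} translated to the discrete setting. The key ingredient is a one-parameter family of exponential martingales under $\mathbb{Q}_{p}$, together with the optional stopping theorem, which together will show that the joint generating function of $(S_{\sigma},\sigma)$ factorizes.

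First, for each $\alpha>0$ I would consider the process
\[
N_{n}^{\alpha}\triangleq\frac{\alpha^{S_{n}}}{\psi(\alpha)^{n}},\qquad \psi(\alpha)\triangleq p\alpha+q\alpha^{-1}.
\]
A one-step computation using $\mathbb{E}_{\mathbb{Q}_{p}}[\alpha^{X_{1}}]=\psi(\alpha)$ shows that $N_{n}^{\alpha}$ is a martingale under $\mathbb{Q}_{p}$ (this is the discrete analogue of the exponential martingale $e^{\lambda\tilde{B}_{t}-\lambda^{2}t/2}$ under $\mathbb{Q}_{\lambda}$). Since $|S_{n\wedge\sigma}|\leqslant k$ and $\sigma<\infty$ almost surely with exponential tails, the stopped process $N_{n\wedge\sigma}^{\alpha}$ is uniformly integrable for $\alpha$ in a neighbourhood of $1$, so the optional stopping theorem gives
\[
\mathbb{E}_{\mathbb{Q}_{p}}\!\left[\frac{\alpha^{S_{\sigma}}}{\psi(\alpha)^{\sigma}}\right]=1.
\]

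The next step is to feed \emph{two} values of $\alpha$ into this identity. Fixing $s$ in a small positive interval and setting $\psi(\alpha)=1/s$, we get the quadratic $p\alpha^{2}-\alpha/s+q=0$ with two positive roots $\alpha_{+}(s),\alpha_{-}(s)$ satisfying $\alpha_{+}\alpha_{-}=q/p$. Writing
\[
A(s)\triangleq\mathbb{E}_{\mathbb{Q}_{p}}\!\big[s^{\sigma}\mathbf{1}_{\{S_{\sigma}=k\}}\big],\qquad B(s)\triangleq\mathbb{E}_{\mathbb{Q}_{p}}\!\big[s^{\sigma}\mathbf{1}_{\{S_{\sigma}=-k\}}\big],
\]
the identity above, applied with $\alpha=\alpha_{\pm}(s)$, yields the linear system
\[
\alpha_{\pm}^{k}A(s)+\alpha_{\pm}^{-k}B(s)=1.
\]
Using $\alpha_{-}^{k}=(q/p)^{k}\alpha_{+}^{-k}$ and $\alpha_{-}^{-k}=(p/q)^{k}\alpha_{+}^{k}$, a short algebraic manipulation makes the $\alpha_{+}$-dependence cancel and one obtains
\[
\frac{B(s)}{A(s)}=\Big(\frac{q}{p}\Big)^{k},
\]
independently of $s$.

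Finally, since $s\mapsto A(s)$ and $s\mapsto B(s)$ are power series whose coefficients are $\mathbb{Q}_{p}(\sigma=n,S_{\sigma}=\pm k)$, comparing coefficients in $B(s)=(q/p)^{k}A(s)$ yields
\[
\mathbb{Q}_{p}(\sigma=n,S_{\sigma}=-k)=(q/p)^{k}\,\mathbb{Q}_{p}(\sigma=n,S_{\sigma}=k)
\]
for every $n\geqslant 0$, so $\mathbb{Q}_{p}(S_{\sigma}=\pm k\mid\sigma=n)$ does not depend on $n$. This is exactly independence of $S_{\sigma}$ and $\sigma$.

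The only delicate point is the optional stopping application, i.e.\ ensuring $\{N_{n\wedge\sigma}^{\alpha}\}$ is uniformly integrable. I would handle this by first working in the range $\psi(\alpha)\geqslant 1$, where the stopped martingale is uniformly bounded, so that the identity $\mathbb{E}_{\mathbb{Q}_{p}}[\alpha^{S_{\sigma}}/\psi(\alpha)^{\sigma}]=1$ holds for $s$ in a small positive interval; the ratio identity $B(s)/A(s)=(q/p)^{k}$ then extends to all admissible $s$ by the uniqueness of generating functions. The rest is routine algebra.
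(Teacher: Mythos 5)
Your proof is correct, but it takes a genuinely different route from the one in the paper. The paper's proof uses the change of measure of Lemma \ref{girs_disc} to reduce everything to the symmetric walk under $\mathbb{Q}_{1/2}$, where independence of $S_\sigma$ and $\sigma$ is obvious by symmetry; the whole argument then rests on the fact that the Radon--Nikodym derivative $z^{S_\sigma}w^{-\sigma}$ factorizes as a function of $S_\sigma$ times a function of $\sigma$, so that independence is preserved under the tilt. You instead work directly under $\mathbb{Q}_p$ with the Wald-type martingale $\alpha^{S_n}/\psi(\alpha)^n$ and run the classical two-root gambler's-ruin computation: writing $u=\alpha_+^k$ and $c=(q/p)^k$, the $2\times 2$ system does indeed collapse to $A(s)=u/(u^2+c)$ and $B(s)=cu/(u^2+c)$, so $B(s)/A(s)=c$ independently of $s$, and comparing power-series coefficients gives $\mathbb{Q}_p(\sigma=n,\,S_\sigma=-k)=(q/p)^k\,\mathbb{Q}_p(\sigma=n,\,S_\sigma=k)$ for all $n$, which is independence since $S_\sigma$ is two-valued. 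Your handling of optional stopping (restricting to $\psi(\alpha)\geqslant 1$, i.e.\ $s\leqslant 1$, where the stopped martingale is bounded) is the right fix, and for such $s$ the two roots are distinct and positive, so the system is nondegenerate. What your approach buys is self-containedness --- it does not need Lemma \ref{girs_disc} at all --- and it produces the explicit joint generating function as a by-product. What it costs is that it leans on $S_\sigma$ taking exactly two values (so a two-equation system determines everything), whereas the paper's factorization argument transfers verbatim to the Brownian case and, more generally, to any situation where the density with respect to a symmetric reference measure splits as a product over exit position and exit time.
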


\begin{proof}
The result is obvious if $p=1/2$. For the biased
case, to ease notation we write $z\triangleq\sqrt{p/q}$ and $w\triangleq(\sqrt{p/q}+\sqrt{q/p})/2.$
Let $f,g$ be arbitrary test functions. According to Lemma \ref{girs_disc},
one has 
\begin{align*}
 & \mathbb{E}^{\mathbb{Q}_{p}}\big[f(S_{\sigma})g(\sigma)\big]\\
 & =\mathbb{E}^{\mathbb{Q}_{1/2}}\big[z^{S_{\sigma}}w^{-\sigma}f(S_{\sigma})g(\sigma)\big]=\mathbb{E}^{\mathbb{Q}_{1/2}}\big[z^{S_{\sigma}}f(S_{\sigma})\big]\mathbb{E}^{\mathbb{Q}_{1/2}}\big[w^{-\sigma}g(\sigma)\big]\\
 & =\mathbb{E}^{\mathbb{Q}_{1/2}}\big[z^{S_{\sigma}}f(S_{\sigma})\big]\mathbb{E}^{\mathbb{Q}_{1/2}}\big[w^{-\sigma}g(\sigma)\big]\times\mathbb{E}^{\mathbb{Q}_{1/2}}\big[z^{S_{\sigma}}w^{-\sigma}\big]\ \ \ (\text{the last quantity is just }1)\\
 & =\mathbb{E}^{\mathbb{Q}_{1/2}}\big[z^{S_{\sigma}}w^{-\sigma}f(S_{\sigma})\big]\mathbb{E}^{\mathbb{Q}_{1/2}}\big[z^{S_{\sigma}}w^{-\sigma}g(\sigma)\big]=\mathbb{E}^{\mathbb{Q}_{p}}\big[f(S_{\sigma})\big]\mathbb{E}^{\mathbb{Q}_{p}}\big[g(\sigma)\big].
\end{align*}
The result thus follows. 
\end{proof}

\begin{proof}[Proof of Theorem \ref{thm:Main} (i)] We could argue here that the proof of this result is the same as for part $(ii)$ of this theorem with appropriate substitutions, and with Lemmas \ref{girs_disc} and \ref{lem:Indep2} in place of Lemmas \ref{girs} and \ref{lem:Indep} (Lemma \ref{lem:EleLem} needs no modification). However, we include the proof here, for completeness.

Let $1/2\leqslant p_{1}<p_{2}$ be given fixed.
We note that 
\[
\mathbb{Q}_{p_{2}}(\sigma>n)=\mathbb{E}_{p_{2}}\big[{\bf 1}_{\{\sigma>n\}}\big]=\mathbb{E}_{p_{1}}\big[\Big(\sqrt{\frac{p_2q_2}{p_1q_1}}\Big)^\sigma \Big(\sqrt{\frac{p_2q_1}{p_1q_2}}\Big)^{S_\sigma}{\bf 1}_{\{\sigma>n\}}\big],
\]
by Lemma \ref{girs_disc}. The independence
property given by Lemma \ref{lem:Indep2} implies that 
\begin{align*}
 & \mathbb{E}_{p_{1}}\big[\Big(\sqrt{\frac{p_2q_2}{p_1q_1}}\Big)^\sigma \Big(\sqrt{\frac{p_2q_1}{p_1q_2}}\Big)^{S_\sigma}{\bf 1}_{\{\sigma>n\}}\big]\\
 & =\mathbb{E}_{p_{1}}\big[\Big(\sqrt{\frac{p_2q_1}{p_1q_2}}\Big)^{S_\sigma}\big]\mathbb{E}_{p_{1}}\big[\Big(\sqrt{\frac{p_2q_2}{p_1q_1}}\Big)^\sigma{\bf 1}_{\{\sigma>n\}}\big]=\frac{\mathbb{E}_{p_{1}}\big[\Big(\sqrt{\frac{p_2q_2}{p_1q_1}}\Big)^\sigma{\bf 1}_{\{\sigma>n\}}\big]}{\mathbb{E}_{p_{1}}\big[\Big(\sqrt{\frac{p_2q_2}{p_1q_1}}\Big)^\sigma\big]} \\
 & = \frac{\mathbb{E}_{p_{1}}\big[\Big(\sqrt{\frac{p_2q_2}{p_1q_1}}\Big)^\sigma\big|\sigma>n\big]}{\mathbb{E}_{p_{1}}\big[\Big(\sqrt{\frac{p_2q_2}{p_1q_1}}\Big)^\sigma \big]} \mathbb{Q}_{p_{1}}(\sigma>n)
\end{align*}
It follows that 
\[
\mathbb{Q}_{p_{2}}(\sigma>n)\leqslant\mathbb{Q}_{p_{1}}(\sigma>n)\iff\mathbb{E}_{p_{1}}\big[\Big(\sqrt{\frac{p_2q_2}{p_1q_1}}\Big)^\sigma\big|\sigma>n\big]\leqslant\mathbb{E}_{p_{1}}\big[\Big(\sqrt{\frac{p_2q_2}{p_1q_1}}\Big)^\sigma\big].
\]
Note that $\frac{p_2q_2}{p_1q_1} < 1$, since $p_2 > p_1$. Lemma \ref{lem:EleLem}
 therefore completes the proof. 

 \end{proof}

\section{The SDE proof}\label{sec:SDEPf}

In this section, we outline the SDE proof of Theorem \ref{thm:Main} (ii). The reader should be aware that this is just a sketch of the argument, and that, for brevity, we do not give the technical details here.

A natural idea is to try to represent the modulus of $B^\lambda_t \triangleq B_{t}+\lambda t$ as an \textit{It\^o process}, namely the solution to an SDE of the form

\begin{equation} \label{goodSDE}
    dX_t=\sigma(X_t)dB_t+b(X_t)dt.
\end{equation} The reason for doing this is that it would allow us to invoke a well-known comparison theorem due to Ikeda and Watanabe \cite{ikeda1977comparison}. This theorem states essentially that if $X^1$ and $X^2$ are diffusions on the same probability space, starting at the same value and each satisfying equations of the form (\ref{goodSDE}) with the same $\sigma$ but with different drift terms $b_1,b_2$ with $b_1(x) \leqslant b_2(x)$ for all $x$, then $X^1_t \leqslant X^2_t$ for all $t$ a.s. (naturally there are conditions on $\sigma$ and the $b$'s, but we do not worry about them for the moment). This would allow us to deduce the desired result, as the exit time from the symmetric interval in question is simply the hitting time of the corresponding level by the process $|B^\lambda_t|$.

We must first remark, however, that the process $|B_{t}^{\lambda}|$ does not satisfy
an SDE of the form (\ref{goodSDE}). This is because the semimartingale
decomposition of $|B_{t}^{\lambda}|$ involves its local time at zero,
which is not absolutely continuous with respect to $dt$ (when $\lambda=0$,
Tanaka's formula gives $|B_{t}|=dW_{t}+dL_{t}$ where $W_{t}$ is
a Brownian motion and $L_{t}$ is the local time of $B$ at zero). 

On the other hand, the process $Y_t \triangleq |B_{t}^{\lambda}|^{2}=(B_{t}^{\lambda})^{2}$
does satisfy an SDE of the required form, which we shall derive below. The case when $\lambda=0$
is straightforward; here $Y_t = B_{t}^{2}$, and one has 
\begin{equation} 
dY_{t}=2B_{t}dB_{t}+dt=2\sqrt{Y_{t}}\cdot\frac{B_{t}}{\sqrt{Y_{t}}}dB_{t}+dt=2\sqrt{Y_{t}}dW_{t}+dt,\label{eq:SDENoDrif}
\end{equation}
where $W_{t}\triangleq\int_{0}^{t}\frac{B_{s}}{\sqrt{Y_{s}}}dB_{s}$
is a Brownian motion by L\'evy's characterisation. The case $\lambda\neq0$ requires
extra care since the same calculation does not lead to a meaningful
SDE for $Y_{t}$ (the drift part contains $B_{t}^{\lambda}$ which
cannot be expressed in terms of $Y_{t}$). 

We take a Markovian perspective to write down the intrinsic SDE for
$Y_{t}.$ It is well known that the process $|B_{t}|$ can be equivalently
viewed as a Markov process with generator 
\[
{\cal A}f=\frac{1}{2}f'';\ {\cal D}({\cal A})=\big\{ f\in C_{b}^{2}([0,\infty)):f'(0+)=0\big\}.
\]
To derive the generator for the process $X_{t}\triangleq|B_{t}^{\lambda}|$,
let $x>0$ be given fixed (it represents the current state $X_{t}=x$).
Explicit calculation shows that 
\[
\mathbb{P}(B_{t}^{\lambda}=x|X_{t}=x)=\frac{e^{\lambda x}}{e^{\lambda x}+e^{-\lambda x}},\ \mathbb{P}(B_{t}^{\lambda}=-x|X_{t}=x)=\frac{e^{-\lambda x}}{e^{\lambda x}+e^{-\lambda x}}.
\]
Given $X_{t}=x,$ if $B_{t}^{\lambda}=x$ the process evolves like
$B_{s}+\lambda s$, while if $B_{t}^{\lambda}=-x$ the process evolves
like $-B_{s}-\lambda s$ ($s\in[t,t+\delta t]$). Since the Brownian
motion $B$ is symmetric, it is clear that the diffusive part (the
second order term) of the generator of $X_{t}$ is also $\frac{1}{2}\frac{d^{2}}{dx^{2}}$.
Its drift part (the first order term) is given by 

\[
\mathbb{P}(B_{t}^{\lambda}=x|X_{t}=x)\times\lambda+\mathbb{P}(B_{t}^{\lambda}=-x|X_{t}=x)\times(-\lambda)=\lambda\tanh\lambda x.
\]
In other words, $X_{t}$ is a Markov process with generator
\[
{\cal L}^{X}=\frac{1}{2}\frac{d^{2}}{dx^{2}}+\lambda\tanh(\lambda x)\frac{d}{dx};\ {\cal D}({\cal L}^{X})=\big\{ f\in C_{b}^{2}([0,\infty)):f'(0+)=0\big\}.
\]
A simple change of variables $x=\sqrt{y}$ shows that the generator
of $Y_{t}$ is given by 
\[
{\cal L}^{Y}=2y\frac{d^{2}}{dy^{2}}+\big(1+2\lambda\sqrt{y}\tanh(\lambda\sqrt{y})\big)\frac{d}{dy},
\]
and the corresponding SDE for $Y_{t}$ is 
\begin{equation}
dY_{t}=2\sqrt{Y_{t}}dW_{t}+(1+2\lambda\sqrt{Y_{t}}\tanh(\lambda\sqrt{Y_{t}}))dt.\label{eq:SDEDrif}
\end{equation}

Note that this agrees with (\ref{eq:SDENoDrif}) when $\lambda=0$. We now observe that the function
\[
\lambda \to 1+2\lambda\sqrt{y}\tanh(\lambda\sqrt{y})
\]
is increasing for $\lambda \in [0,\infty)$ for all $y$,
and therefore the comparison theorem of Ikeda-Watanabe easily gives our desired result. \qed

\vspace{12pt}

\begin{rem} Invoking the theorem of Ikeda-Watanabe requires checking certain conditions on the functions comprising the SDE (\ref{eq:SDEDrif}). These conditions are slightly technical to state but straightforward to verify for the SDE in question, so we have chosen not to include them. 

Unlike $X_t$ itself, the process $Y_t$ is indeed an It\^o diffusion (it does satisfy the SDE (\ref{eq:SDEDrif})) because the local time term will not appear. In fact, in the decomposition
\begin{equation}
dY_{t}=2X_{t}dX_{t}+dX_{t}\cdot dX_{t},\label{eq:YSDE}
\end{equation}
the local time term $X_{t}dL_{t}$ (coming from the first term in
(\ref{eq:YSDE})) vanishes identically due to the fact that $L_{t}$
only increases when $X_{t}=0$ (this is a basic property of the local
time). 

From general theory, the SDE (\ref{eq:SDEDrif}) has a unique strong
solution for every initial condition $Y_{0}=y\geqslant0.$ One can
then show that its solution (with $Y_{0}=0$) has the same distribution
as the process $|B_{t}+\lambda t|^{2}$. This can be taken as a more
direct approach to justify the above considerations. 
\end{rem}

\begin{rem}
    An analogous argument was applied to the modulus of the biased random walk $|S_n|$ in \cite[Section 5.8]{ross2023second}, in order to show that $|S_n|$ is a Markov chain and to compute the transition probabilities. Part (i) of Theorem \ref{thm:Main} is then deduced as a consequence, much in the same manner that we have concluded part (ii) from the SDE above.
\end{rem}

\section{Acknowledgements}

XG gratefully acknowledges the ARC support DE210101352. Both authors are thankful to Nathan Ross and Stephen Muirhead for valuable discussions and suggestions on the problem. They would also like to extend their thanks to an anonymous referee and an anonymous editor for suggestions which have significantly improved and simplified the paper.

\bibliographystyle{plainurl}
\bibliography{citation}

\begin{thebibliography}{1}

\bibitem{AK18}
T.~Alberts and D.~Khoshnevisan.
\newblock {\em Calculus on Gauss space: an introduction to Gaussian analysis}.
\newblock Online lecture notes, 2018.

\bibitem{ikeda1977comparison}
N.~Ikeda and S.~Watanabe.
\newblock A comparison theorem for solutions of stochastic differential equations and its applications.
\newblock {\em Osaka Journal of Mathematics}, 14:619--633, 1977.

\bibitem{klebaner2012introduction}
F.~Klebaner.
\newblock {\em Introduction to stochastic calculus with applications}.
\newblock World Scientific Publishing Company, 2012.

\bibitem{pekoz2024increasing}
E.~Pek{\"o}z and R.~Righter.
\newblock Increasing gambler's ruin duration and {B}rownian motion exit times.
\newblock {\em Stochastic Models}, page~1, 2024.

\bibitem{pekoz2022fair}
E.~Pek{\"o}z and S.~Ross.
\newblock Fair gambler's ruin stochastically maximizes playing time.
\newblock {\em Advances in Applied Probability}, 54(2):656--659, 2022.

\bibitem{revuz1991continuous}
D.~Revuz and M.~Yor.
\newblock {\em Continuous martingales and Brownian motion}.
\newblock Springer-Verlag, 1991.

\bibitem{RW20}
L.C.G. Rogers and D.~Williams.
\newblock {\em Diffusions, Markov processes and martingales, second edition}.
\newblock Cambridge University Press, 2000.

\bibitem{ross2023second}
S.~Ross and E.~Pek{\"o}z.
\newblock {\em A second course in probability}.
\newblock Cambridge University Press, 2nd edition, 2023.

\bibitem{zhang2023finding}
Z.~Zhang and S.~Ross.
\newblock Finding the best dueler.
\newblock {\em Mathematics}, 11(7):1568, 2023.

\end{thebibliography}

\end{document}